\newtheoremstyle{mytheoremstyle} 
    {5pt}                    
    {5pt}                    
    {\itshape}                   
    {\parindent}                           
    {\bf}                   
    {.}                          
    {.5em}                       
    {}  
\theoremstyle{mytheoremstyle}
\newtheorem{theorem}{Theorem}[section]
\newtheorem{lemm}[theorem]{Lemma}
\newtheorem{prop}[theorem]{Proposition}
\newtheorem{coro}[theorem]{Corollary}
\newtheoremstyle{mytdefintionstyle} 
    {5pt}                    
    {5pt}                    
    {\rm}                   
    {\parindent}                           
    {\bf}                   
    {.}                          
    {.5em}                       
    {}  
\theoremstyle{remark}
\newtheorem{rmrk}[theorem]{Remark}
\theoremstyle{mytdefintionstyle}
\newtheorem{defn}[theorem]{Definition}
\newtheoremstyle{exmp_contd}
    {5pt}                    
    {5pt}                    
    {\rm}                   
    {\parindent}                           
    {\bf}                   
    {.}                          
    {.5em}                       
    {\thmname{#1}\ \thmnumber{ #2}\thmnote{#3}\ (continued)}  
\theoremstyle{exmp_contd}
\newcommand\nameft\textrm
\newcommand{\QQ}{{\mathscr{Q}}}
\renewcommand{\SS}{{\mathscr{S}}}
\newcommand{\FF}{{\mathscr{F}}}
\newcommand{\GG}{{\mathscr{G}}}
\newcommand{\HH}{{\mathscr{H}}}
\newcommand{\WW}{{\mathscr{W}}}
\DeclareMathOperator{\Ext}{Ext}
\DeclareMathOperator{\Hom}{Hom}
\DeclareMathOperator{\Sat}{Sat}
\newcommand{\Coh}{\mathfrak{Coh}\,}
\newcommand\A{\mathcal{A}}
\newcommand\C{\mathcal{C}}
\newcommand\B{\mathcal{B}}
\newcommand\F{\mathcal{F}}
\newcommand{\Z}{\mathbb{Z}}
\renewcommand\phi{\varphi}
\DeclareMathOperator\Id{Id}
\DeclareMathOperator\cores{co-res}
\definecolor{darkgray}{rgb}{0.3,0.3,0.3}
\definecolor{darkgreen}{rgb}{0.008,0.617,0.067}
\definecolor{brown}{rgb}{0.6,0.4,0.2}
\newif\ifjournalversion
\author{Mohamed Barakat}
\address{Department of mathematics, University of Kaiserslautern, 67653 Kaiserslautern, Germany}
\email{\href{mailto:Mohamed Barakat <barakat@mathematik.uni-kl.de>}{barakat@mathematik.uni-kl.de}}
\author{Markus Lange-Hegermann}
\address{Lehrstuhl B f\"ur Mathematik, RWTH Aachen University, 52062 Aachen, Germany}
\email{\href{mailto:Markus Lange-Hegermann <markus.lange.hegermann@rwth-aachen.de>}{markus.lange.hegermann@rwth-aachen.de}}
\begin{document}

\title[Monads of reflective localizations of \nameft{Abel}ian categories]{On monads of exact reflective localizations of \nameft{Abel}ian categories}

\begin{abstract}
  In this paper we define \nameft{Gabriel} monads as the idempotent monads associated to exact reflective localizations in \nameft{Abel}ian categories and characterize them by a simple set of properties.
  The coimage of a \nameft{Gabriel} monad is a \nameft{Serre} quotient category.
  The \nameft{Gabriel} monad induces an equivalence between its coimage and its image, the localizing subcategory of local objects.
\end{abstract}

\keywords{\nameft{Serre} quotient, reflective localization of \nameft{Abel}ian categories, idempotent monad, \nameft{Gabriel} localization, saturating monad}
\subjclass[2010]{%
18E10, 
18E35, 
18A40} 

\maketitle

\section{Introduction}

\nameft{Abel}ian categories were introduced in \nameft{Grothendieck}'s T\^ohoku paper \cite{Tohoku}, and since then became a central notion in homological algebra.
In our attempt to establish a constructive setup for homological algebra we introduced in \cite[Chap.~2]{BL} the notion of a computable \nameft{Abel}ian category, i.e.,  an \nameft{Abel}ian category in which all existential quantifiers occurring in the defining axioms can be turned into algorithms.
Along these lines we treated in loc.~cit.\  the \nameft{Abel}ian categories of finitely presented modules over so-called computable rings and their localization at certain maximal ideals.

Our next goal is to treat the \nameft{Abel}ian category $\Coh X$ of coherent sheaves on a projective scheme $X$ along the same lines.
This category is, by \nameft{Serre}'s seminal paper \cite{FAC}, equivalent to a \nameft{Serre} quotient $\A/\C$ of an \nameft{Abel}ian category $\A$ of graded modules over the graded ring $S=k[x_0,\ldots,x_n]/I$, where $I=I(X)$ is the homogeneous ideal defining $X$ and $\C$ is the thick subcategory of graded modules with zero sheafification.
In the context of this paper we require the thick subcategory $\C \subset \A$ to be localizing.
Indeed, there are several ways to model $\Coh X$ as a \nameft{Serre} quotient $\A/\C$ where $\A$ is some \nameft{Abel}ian category of graded $S$-modules, but not in all models is the thick subcategory $\C \subset \A$ localizing.
\nameft{Serre} defined in loc.~cit.\  $\A$ as the category of quasi finitely generated graded $S$-modules, i.e., of graded modules $M$ such that the truncated submodule $M_{\geq d}$ is finitely generated for some $d \in \Z$ large enough.
Here $\C \subset \A$ is localizing but this model is not constructive.
Redefining $\A,\C$ to be the respective full subcategories of finitely generated graded $S$-modules indeed yields a constructive model $\A/\C \simeq \Coh X$, but now $\C \subset \A$ is no longer localizing.
Luckily, for each $d_0 \in \Z$ the respective full subcategories $\A,\C$ of graded $S$-modules truncated at $d_0$ yield a constructive model $\A/\C \simeq \Coh X$ in which $\C \subset \A$ is again localizing (for more details cf.~\cite[§4.1]{BL_SerreQuotients}).
In all models a coherent sheaf $\F$ is given by a class of graded modules isomorphic in high degrees.
In the last model the truncated (and hence finitely generated) module of twisted global sections $\bigoplus_{d \geq d_0} H^0(\F(d))$ is in the following sense a distinguished representative within this class; it is a so-called saturated object with respect to $\C$.

The appropriate categorical setup for a \nameft{Serre} quotient $\A / \C$ of an \nameft{Abel}ian category $\A$ modulo a thick subcategory $\C \subset \A$ was introduced by \nameft{Grothendieck} \cite[Chap.~1.11]{Tohoku} and then more elaborately in \nameft{Gabriel}'s thesis \cite{Gab_thesis}.
Later \nameft{Gabriel} and \nameft{Zisman} developed in \cite{GabZis} a localization theory of categories in which a \nameft{Serre} quotient $\A/\C$ is an outcome of a special localization $\A \to \A[\Sigma^{-1}] \simeq \A / \C$.
Their theory is also general enough to enclose \nameft{Verdier}'s localization of triangulated categories, which he used in his 1967 thesis (cf.~\cite{Verdier}) to define derived categories.
Thanks to \nameft{Simpson}'s work \cite{SimpsonGZComputer} the \nameft{Gabriel-Zisman} localization is now completely formalized in the proof assistant \nameft{Coq} \cite{coq:manual}.
In many applications, as assumed in \cite{GabZis}, the localization $\A[\Sigma^{-1}]$ is equivalent to a full subcategory of $\A$, the subcategory of all $\Sigma$-local objects of $\A$.
This favorable situation (which in our context means that $\C$ is a localizing subcategory of $\A$) is characterized by the existence of an idempotent monad associated to the localization.
For a further overview on localizations we refer to the \texttt{arXiv} version of \cite{ThomasArrow}.

In our application to $\Coh X$ we are in the setup of \nameft{Serre} quotient categories $\A / \C$, which are the outcome of an exact localization having an associated idempotent monad.
We call this monad the \nameft{Gabriel} monad (cf.~Definition~\ref{defn:Gabriel_monad}).
\nameft{Gabriel} monads satisfy a set of properties which we use as a simple set of axioms to define what we call a $\C$-saturating monad.

The goal of this paper is to characterize \nameft{Gabriel} monads conversely as $\C$-saturating monads (Theorem~\ref{thm:equivalent_section}).
Such a characterization enables us in \cite{BL_Sheaves} to show that several known algorithmically computable functors in the context of coherent sheaves on a projective scheme\footnote{This technique applies to other classes of varieties admitting a finitely generated \nameft{Cox} ring $S$.
} are $\C$-saturating\footnote{In this context the $\Sigma$-local objects are \nameft{Gabriel}'s $\C$-saturated objects.}.
This yields a constructive, unified, and simple proof that those functors are equivalent to the functor $M \mapsto \bigoplus_{d \geq d_0} H^0(\widetilde{M}(d))$, and hence compute the truncated module of twisted global sections.
Among those are the functors computing the graded ideal transform and the graded module given by the $0$-th strand of the \nameft{Tate} resolution.

The proof there relies on checking the defining set of axioms of a $\C$-saturating monad, which turns out to be a relatively easy task.
In particular, the proof does not rely on the (full) \nameft{BGG} correspondence \cite{BGG} of triangulated categories, the \nameft{Serre-Grothendieck} correspondence \cite[20.3.15]{BrSh}, or the local duality, as used in \cite{EFS}.

A stronger computability notion is that of the $\Ext$-computability.
Furthermore, in \cite{BL_ExtComputability} we use the \nameft{Gabriel} monad of a localizing \nameft{Serre} quotient $\A/\C$ to show that the so-called $\Ext$-computability of $\A/\C$ follows from that of $\A$.
In particular, the \nameft{Abel}ian category $\Coh X$ is $\Ext$-computable (cf.~\cite{BL_Sheaves}).

\section{Preliminaries} \label{sect:Preliminaries}

In this short section we collect some standard categorical preliminaries, which should make this paper self-contained for the readers interested in our applications to coherent sheaves \cite{BL_Sheaves}.
For details we refer to \cite{Borceux_Handbook1,Borceux_Handbook2} or the active $n$\textsf{Lab} wiki \cite{nLab}.

\subsection{Adjoint functors and monads}

\begin{defn}
Two functors $\FF: \A \to \B$ and $\GG: \B \to \A$ are called a \textbf{pair of adjoint functors} and denoted by $\FF \dashv (\GG: \B \to \A)$ if there exists a binatural isomorphism of $\Hom$ functors
\[
  \Hom_\B(\FF(-),-) \simeq \Hom_\A(-,\GG(-)) \mbox{.}
\]
$\FF$ is called a \textbf{left adjoint} of $\GG$ and $\GG$ is a \textbf{right adjoint} of $\FF$.
\end{defn}

\begin{rmrk} \label{rmrk:right_adjoints}
  Two right adjoints (resp. two left adjoints) are naturally isomorphic.
\end{rmrk}

\begin{prop}
  An adjunction $\FF \dashv (\GG: \B \to \A)$ is characterized by the existence of two natural transformations
  \[
    \delta: \FF \circ \GG \to \Id_\B \quad\mbox{and}\quad \eta:\Id_\A \to \GG \circ \FF\mbox{,}
  \]
  called the \textbf{counit}\footnote{$\delta_b$ corresponds to $1_{\GG b}$ under the natural isomorphism
$\Hom_\B((\FF \circ \GG)b,b) \cong \Hom_\A(\GG b,\GG b)$.} and \textbf{unit\footnote{$\eta_a$ corresponds to $1_{\FF a}$ under the natural isomorphism
$\Hom_\A(a,(\GG\circ \FF) a) \cong \Hom_\B(\FF a,\FF a)$.} of the adjunction}, respectively, such that the two \textbf{zig-zag} (or \textbf{unit-counit}) \textbf{identities} hold, i.e., the compositions of the natural transformations
  \begin{equation} \tag{zz} \label{zigzag}
      \FF \xrightarrow{\FF\eta} \FF\circ \GG \circ \FF \xrightarrow{\delta \FF} \FF \quad\mbox{and}\quad \GG \xrightarrow{\eta \GG} \GG\circ \FF \circ  \GG \xrightarrow{\GG \delta} \GG
  \end{equation}
  must be the identity on functors.
\end{prop}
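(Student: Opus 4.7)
The plan is to prove the proposition by showing both directions: an adjunction produces unit and counit satisfying the zig-zag identities, and conversely, a unit-counit pair satisfying those identities produces an adjunction isomorphism.

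For the forward direction, I would start from the natural isomorphism $\phi_{a,b}\colon \Hom_\B(Fa,b) \xrightarrow{\sim} \Hom_\A(a,Gb)$ and define, as suggested by the footnotes, the components of the unit and counit as $\eta_a := \phi_{a,Fa}(1_{Fa})$ and $\delta_b := \phi_{Gb,b}^{-1}(1_{Gb})$. I would then verify that the two squares expressing naturality of $\eta$ in $a$ (resp.\ $\delta$ in $b$) follow directly from the naturality of $\phi$ in both arguments, applied to $1_{Fa}$ (resp.\ $1_{Gb}$). A standard bookkeeping computation using naturality in the "other" slot then shows that $\phi(f) = Gf \circ \eta_a$ and $\phi^{-1}(g) = \delta_b \circ Fg$ for arbitrary morphisms $f, g$; once these formulas are established, the zig-zag identities are simply the statements $\phi(\phi^{-1}(1_{Fa})) = 1_{Fa}$ after applying $G$ to the first, and $\phi^{-1}(\phi(1_{Gb})) = 1_{Gb}$ for the second, rewritten via the formulas above.

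For the reverse direction, I would assume $\eta$ and $\delta$ are given and satisfy (\ref{zigzag}), and explicitly define
\[
  \phi_{a,b}\colon \Hom_\B(Fa,b) \to \Hom_\A(a,Gb),\quad f \mapsto Gf \circ \eta_a,
\]
with candidate inverse
\[
  \psi_{a,b}\colon \Hom_\A(a,Gb) \to \Hom_\B(Fa,b),\quad g \mapsto \delta_b \circ Fg.
\]
Then $\psi \circ \phi$ applied to $f\colon Fa \to b$ gives $\delta_b \circ F(Gf \circ \eta_a) = \delta_b \circ FGf \circ F\eta_a$, which by naturality of $\delta$ equals $f \circ \delta_{Fa} \circ F\eta_a$, and the first zig-zag identity collapses this to $f$. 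The computation that $\phi \circ \psi = \Id$ is symmetric, using naturality of $\eta$ and the second zig-zag identity. Naturality of $\phi$ in $a$ and $b$ follows from naturality of $\eta$ and functoriality of $G$.

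The two directions combined give the claimed characterization. The main obstacle is not conceptual but notational: keeping the whiskerings $F\eta$, $\delta F$, $\eta G$, $G\delta$ straight and correctly identifying which naturality square is being invoked at each step. Once one commits to writing every composition out with explicit components, the verification reduces to repeated use of the interchange law for natural transformations together with the two zig-zag identities.
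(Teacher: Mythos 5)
Your proof is correct and is the standard textbook argument; note, however, that the paper itself does not prove this proposition at all—it is stated as background material and referred to Borceux's \emph{Handbook of categorical algebra} and the $n$Lab, so there is no in-paper proof to compare against. Your two-directional argument (extract $\eta,\delta$ from $\phi$ and derive the zig-zag identities; conversely, build $\phi$ and its inverse $\psi$ from $\eta,\delta$ and verify bijectivity via naturality plus the zig-zag identities) is exactly the canonical way to establish this.

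One cosmetic slip in the forward direction: the first zig-zag identity $\delta_{Fa}\circ F\eta_a = 1_{Fa}$ is $\phi^{-1}\bigl(\phi(1_{Fa})\bigr)=1_{Fa}$ rewritten via $\phi^{-1}(g)=\delta_b\circ Fg$, and the second zig-zag identity $G\delta_b\circ\eta_{Gb}=1_{Gb}$ is $\phi\bigl(\phi^{-1}(1_{Gb})\bigr)=1_{Gb}$ rewritten via $\phi(f)=Gf\circ\eta_a$; you have the two swapped. Since $\phi$ is a bijection both identities hold, so nothing breaks, but the correspondence of ``which zig-zag comes from which direction of the bijection'' is reversed in your write-up. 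The reverse direction is stated exactly right, and the observation that everything reduces to the two whiskering formulas, naturality of $\eta$ and $\delta$, and the zig-zag identities is the crux.
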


\begin{defn}
  A \textbf{monad} on a category $\A$ is an endofunctor $\WW:\A \to \A$ together with two natural transformations
  \[
    \eta: \Id_\A \to \WW \quad\mbox{and}\quad \mu: \WW^2 \to \WW \mbox{,}
  \]
  called the \textbf{unit} and \textbf{multiplication of the monad}, respectively, such that following two \textbf{coherence conditions} hold:
  \[
    \mu \circ \WW \mu = \mu \circ \mu \WW \quad\mbox{and}\quad \mu \circ \WW\eta= \mu \circ \eta \WW = \Id_\WW \mbox{.}
  \]
\end{defn}

\begin{rmrk}
  Let $\FF \dashv (\GG:\B \to \A)$ be an adjoint pair with unit $\eta$ and counit $\delta$.
  The composed endofunctor $\WW := \GG \circ \FF: \A \to \A$ together with the unit of the adjunction $\eta: \Id_\A \to \GG \circ \FF$ as the monad unit $\eta: \Id_\A \to \WW$ and 
  \[
    \mu:=\GG\delta \FF:\WW^2 \to \WW
  \]
  as the monad multiplication is a monad.
  It is called the \textbf{monad associated to the adjoint pair}.
\end{rmrk}

\subsection{Idempotent monads and reflective localizations} \label{subsect:reflective}

\begin{defn}
  A monad $(\WW,\eta,\mu)$ is called an \textbf{idempotent monad} if the multiplication $\mu: \WW^2 \to \WW$ is a natural isomorphism.
\end{defn}

\begin{defn}
  A full subcategory $\B \subset \A$ is called a \textbf{reflective subcategory} if the inclusion functor $\iota: \B \to \A$ has a left adjoint, called the \textbf{reflector}.
\end{defn}

\begin{prop}[{\cite[§4.2]{Borceux_Handbook2}}] \label{prop:reflective}
  Let $(\WW,\widetilde{\eta},\mu)$ be a monad on the category $\A$.
  Then the following statements are equivalent:
  \begin{enumerate}
    \item The monad $(\WW,\widetilde{\eta},\mu)$ is idempotent.
    \item The two natural transformations $\WW \widetilde{\eta}, \widetilde{\eta} \WW:\WW \to \WW^2$ are \emph{equal}.
    \item The (essential) image of $\WW$ in $\A$ is a reflective subcategory.
    \item The corestriction $\widehat{\GG}:=\cores_{\WW(\A)} \WW$ of $\WW$ to its (essential) image $\WW(\A)$ is left adjoint to the inclusion functor $\iota$ of $\WW(\A)$ into $\A$: $\widehat{\GG} \dashv (\iota: \WW(\A) \to \A)$.
    \item There exists an adjunction $\FF \dashv (\GG: \B \to \A)$ with unit $\eta$ and counit $\delta$ where $\GG$ is fully faithful such that its associated monad $(\GG \circ \FF, \eta, \GG \delta \FF)$ is isomorphic to $(\WW,\widetilde{\eta},\mu)$.
  \end{enumerate}
\end{prop}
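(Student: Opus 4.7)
The plan is to prove the five conditions equivalent via $(1)\Leftrightarrow(2)$ together with the cycle $(1)\Rightarrow(4)\Rightarrow(3)\Rightarrow(5)\Rightarrow(1)$. The pivotal observation tying the ``monadic'' conditions (1), (2), (5) to the ``reflective'' conditions (3), (4) is the standard fact that, for an adjunction $F\dashv G$, the right adjoint $G$ is fully faithful precisely when the counit $\delta:F\circ G\to\Id$ is a natural isomorphism. This lets me translate between invertibility of $\mu$ on the monad side and reflectivity on the subcategory side.

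The equivalence $(1)\Leftrightarrow(2)$ rests on the unit laws $\mu\circ W\eta=\mu\circ\eta W=\Id_W$. If $\mu$ is invertible, composing with $\mu^{-1}$ immediately yields $W\eta=\eta W$. Conversely, given this equality, the unit law already exhibits $\eta W$ as a section of $\mu$; to see it is also a retraction I would apply naturality of $\eta$ at the morphism $\mu_a$ to obtain $\eta_{Wa}\circ\mu_a=W(\mu_a)\circ\eta_{W^2a}$, replace $\eta_W$ by $W\eta$ using (2), and combine associativity $\mu\circ W\mu=\mu\circ\mu W$ with the unit law to conclude $\eta W\circ\mu=\Id_{W^2}$. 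The implication $(5)\Rightarrow(1)$ is then immediate: if $G$ in (5) is fully faithful then $\delta$ is a natural isomorphism, so $\mu\cong G\delta F$ is a natural isomorphism as well.

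For $(1)\Rightarrow(4)$, let $\B:=W(\A)$ be the essential image. Idempotency together with $\mu\circ\eta W=\Id_W$ forces $\eta_{Wa}$ to be an isomorphism for every $a\in\A$, hence $\eta_b$ is invertible for every $b\in\B$. I would then exhibit the adjunction bijection
\[
  \Hom_\A(a,\iota b)\;\longleftrightarrow\;\Hom_\B\bigl(\cores_{W(\A)}W\,a,\;b\bigr)
\]
by $f\mapsto\eta_b^{-1}\circ W(f)$ in one direction and $g\mapsto g\circ\eta_a$ in the other, naturality of $\eta$ making both composites identities. The implication $(4)\Rightarrow(3)$ is immediate from the definition of reflectivity, and for $(3)\Rightarrow(5)$ the reflector/inclusion pair directly supplies an adjunction with fully faithful right adjoint; uniqueness of left adjoints (together with the previous steps) identifies the associated monad with $(W,\widetilde{\eta},\mu)$ up to isomorphism.

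The step I expect to require the most care is $(2)\Rightarrow(1)$: it is the only place where one cannot simply transport structure along an adjunction but must actually juggle the monad axioms, keeping $\eta_{Wa}$ and $W(\eta_a)$ typographically distinct even once they are known to be equal, and carefully tracking at each stage whether associativity or naturality of $\eta$ (or of $\mu$) is being invoked.
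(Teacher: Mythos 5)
The paper itself does not prove this proposition; it cites Borceux, \emph{Handbook of Categorical Algebra 2}, \S 4.2, so there is no in-paper argument to compare yours against. Judged on its own terms, your plan $(1)\Leftrightarrow(2)$ and $(1)\Rightarrow(4)\Rightarrow(3)\Rightarrow(5)\Rightarrow(1)$ is sound in outline, and the sketches of $(1)\Leftrightarrow(2)$, $(5)\Rightarrow(1)$, $(1)\Rightarrow(4)$, $(4)\Rightarrow(3)$ are essentially right. Two smaller remarks first. In $(2)\Rightarrow(1)$ you do not need associativity: naturality of $\widetilde{\eta}$ at $\mu_a$ gives $\widetilde{\eta}_{Wa}\circ\mu_a = W(\mu_a)\circ\widetilde{\eta}_{W^2a}$, condition (2) rewrites $\widetilde{\eta}_{W^2a}=W(\widetilde{\eta}_{Wa})$, and the unit law $\mu_a\circ\widetilde{\eta}_{Wa}=\Id_{Wa}$ then yields $\widetilde{\eta}_{Wa}\circ\mu_a = W(\Id_{Wa})=\Id_{W^2a}$. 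In $(1)\Rightarrow(4)$, checking that your two maps are mutually inverse uses not only naturality of $\widetilde{\eta}$ but also, in the round trip $g\mapsto g\circ\widetilde{\eta}_a\mapsto \widetilde{\eta}_b^{-1}\circ W(g)\circ W(\widetilde{\eta}_a)$, the identity $W\widetilde{\eta}=\widetilde{\eta} W$ from (2); this is available since you proved $(1)\Leftrightarrow(2)$, but it should be cited.

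The genuine gap is in $(3)\Rightarrow(5)$. Condition (3) only asserts that the full subcategory $W(\A)$ is reflective; it gives you a reflector $L\dashv\iota$ with fully faithful right adjoint and hence \emph{some} idempotent monad $\iota L$ on $\A$, but it says nothing a priori about how $L$ and its unit relate to the given $W$ and $\widetilde{\eta}$. You invoke ``uniqueness of left adjoints (together with the previous steps)'' to identify the associated monad with $(W,\widetilde{\eta},\mu)$, but uniqueness of adjoints only helps once you already know that $\cores_{W(\A)}W$ is a left adjoint of $\iota$, which is precisely condition (4) --- and (4) is not a hypothesis available to you while proving $(3)\Rightarrow(5)$ inside a cycle (and ``previous steps'' such as $(1)\Rightarrow(4)$ cannot be imported, since they assume (1)). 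As it stands, your argument produces an adjunction with fully faithful right adjoint but does not show its monad is isomorphic to $(W,\widetilde{\eta},\mu)$. You need a direct argument that the reflection unit $\eta'\colon\Id_\A\to\iota L$ and the monad unit $\widetilde{\eta}$ induce a natural isomorphism $\iota L\cong W$ compatible with the monad data; equivalently, prove $(3)\Rightarrow(4)$ (or $(3)\Rightarrow(2)$) outright, which is the non-trivial part of tying the purely subcategory-theoretic condition (3) back to the given monad.
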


A localization of a category $\A$ at collection of morphisms $\Sigma$ is roughly speaking the process of adjoining formal inverses of the morphisms in $\Sigma$ (cf.~\cite[Definition~10.3.1]{weihom}).

\begin{defn}[{\cite[§I.1.1]{GabZis}}] \label{defn:idem}
  A localization $\A \to \A[\Sigma^{-1}]$ (at a subset of morphisms $\Sigma$ of $\A$) is called \textbf{reflective localization} if it admits a fully faithful right adjoint $\GG:\A[\Sigma^{-1}] \rightarrow \A$.
\end{defn}

The following proposition states, among other things, that all functors having fully faithful right adjoints are in fact reflective localizations.

\begin{prop}[{\cite[Proposition~I.1.3]{GabZis}}] \label{prop:idempotent}
  Let $\FF \dashv (\GG: \B \to \A)$ be a pair of adjoint functors with unit $\eta$ and counit $\delta$.
  Further let $(\WW,\eta,\mu) := (\GG \circ \FF, \eta, \GG \delta \FF)$ be the associated monad.
  The following statements are equivalent:
  \begin{enumerate}
    \item $\GG$ is fully faithful (inducing an equivalence between $\B$ and its (essential) image, a reflective subcategory of $\A$).
    \item The associated monad $(\WW,\eta,\mu)$ is idempotent\footnote{In this case one says that the adjunction $\FF \dashv (\GG: \B \to \A)$ is idempotent.}, $\FF$ is essentially surjective, and $\GG$ is conservative\footnote{i.e., $\GG$ reflects isomorphisms: $\GG(\alpha)$ isomorphism $\implies$ $\alpha$ isomorphism.}.
    \item The counit $\delta:\FF \circ \GG \to Id_\B$ is a natural isomorphism\footnote{In particular, $\GG$ is a right inverse of $\FF$ and $\FF$ is essentially surjective.}.\label{prop:idempotent:delta}
    \item If $\Sigma$ is the set of morphisms $\phi \in \A$ such that $\FF(\phi)$ is invertible in $\B$, then $\FF:\A \to \B$ realizes the reflective localization $\A \to \A[\Sigma^{-1}]$.
  \end{enumerate}
  In particular, $\WW(\A) \simeq \GG(\B) \simeq \B \simeq \A[\Sigma^{-1}]$.
\end{prop}

\section{\nameft{Gabriel} localizations} \label{sect:Gabriel}

In this section we recall some results about \nameft{Serre} quotients arising from reflective localizations. From now on $\A$ is an \nameft{Abel}ian category.

\begin{defn}[{\cite[Exer.~10.3.2]{weihom}}]
  A non-empty full subcategory $\mathcal{C}$ of an \nameft{Abel}ian category $\A$ is called \textbf{thick}\footnote{\nameft{Gabriel} uses the notion \emph{\'epaisse}.
  Thick subcategories are automatically replete.} if it is closed under passing to subobjects, factor objects, and extensions. In other words, for every short exact sequence
  \[
    0 \to M' \to M \to M'' \to 0
  \]
  in $\A$ the object $M$ lies in $\C$ if and only if $M'$ and $M''$ lie in $\C$.
\end{defn}

\nameft{Pierre Gabriel} defines in his thesis \cite[§III.1]{Gab_thesis} the \textbf{(\nameft{Serre}) quotient category} $\A/\C$ with the same objects as $\A$ and with $\Hom$-groups defined by
\[
  \Hom_{\A/\C}(M,N) = \varinjlim_{M',N'} \Hom_\A(M',N/N')\mbox{,}
\]
where the direct limit is taken over all subobjects $M' \leq M$ and $N' \leq N$ such that $M/M'$ and $N'$ belong to $\C$.
Further he defines the \textbf{canonical functor} $\QQ:\A \to \A/\C$ which is the identity on objects\footnote{So $\QQ$ is surjective (on objects) but in general neither faithful nor full.} and which maps a morphism $\phi \in \Hom_\A(M,N)$ to its image in $\Hom_{\A/\C}(M,N)$ under the maps
\[
  \Hom(M' \hookrightarrow M, N \twoheadrightarrow N/N'):\Hom_\A(M,N) \to \Hom_\A(M',N/N') \mbox{.}
\]

\nameft{Gabriel} proves in \cite[Proposition~III.1.1]{Gab_thesis} that $\A/\C$ is an \nameft{Abel}ian category and that the canonical functor $\QQ: \A \to \A/\C$ is exact.
The canonical functor $\QQ: \A \to \A/\C$ fulfills the following universal property.
\begin{prop}[{\cite[Cor.~III.1.2]{Gab_thesis}}] \label{prop:universal_property_of_Q}
  Let $\C$ be a thick subcategory of $\A$ and $\GG:\A \to \mathcal{D}$ an exact functor into the \nameft{Abel}ian category $\mathcal{D}$.
  If $\GG(\C)$ is zero then there exists a unique functor $\HH:\A/\C \to \mathcal{D}$ such that $\GG=\HH \circ \QQ$.
\end{prop}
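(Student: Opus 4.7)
The plan is to construct $H$ by hand, exploiting that $Q$ is the identity on objects and that every morphism in $\A/\C$ is represented by a ``roof'' of the form
\[
  M \hookleftarrow M' \xrightarrow{\phi'} N/N' \twoheadleftarrow N
\]
with $M/M' \in \C$ and $N' \in \C$, together with verifying the standard compatibilities in the direct limit.

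First I would set $H(M) := G(M)$ on objects, which is forced by the equation $G = H\circ Q$ since $Q$ acts as the identity on objects. The key observation to handle morphisms is that, because $G$ is exact and kills $\C$, applying $G$ to any short exact sequence $0 \to M' \to M \to M/M' \to 0$ with $M/M' \in \C$ yields an isomorphism $G(M' \hookrightarrow M)$ in $\mathcal{D}$, and dually $G(N \twoheadrightarrow N/N')$ is an isomorphism when $N' \in \C$. I would therefore define
\[
  H\bigl([\phi']\bigr) := G(N \twoheadrightarrow N/N')^{-1} \circ G(\phi') \circ G(M' \hookrightarrow M)^{-1}
\]
on a representative $\phi' \colon M' \to N/N'$ of a morphism in $\Hom_{\A/\C}(M,N)$.

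Next I would check that this assignment respects the directed system defining the direct limit: if a representative $\phi'\colon M' \to N/N'$ is refined by passing to a smaller subobject $M'' \leq M'$ with $M/M'' \in \C$ and a larger subobject $N'' \geq N'$ with $N'' \in \C$, the resulting expressions agree in $\mathcal{D}$ because the corresponding inclusions and projections in $\A$ become isomorphisms after applying $G$, and the relevant diagrams commute in $\A$ by construction. Functoriality (compatibility with composition in $\A/\C$, which is computed via pullback-and-pushout of roofs, and preservation of identities) then follows from the functoriality of $G$ and the fact that the isomorphisms introduced for the two legs cancel against each other at the common middle object. Finally, uniqueness of $H$ follows from two observations: $H$ is forced on objects, and every morphism in $\A/\C$ is a composition $Q(\pi)^{-1}\circ Q(\phi')\circ Q(\iota)^{-1}$ of images of morphisms in $\A$ together with formal inverses of morphisms of type $\iota,\pi$, on which the relation $G = H\circ Q$ completely determines $H$.

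I expect the main obstacle to be the well-definedness of $H$ on morphisms, specifically showing that different cofinal choices of the subobjects $M'$ and $N'$ give the same element of $\Hom_\mathcal{D}(G(M),G(N))$; this is where exactness of $G$ together with $G(\C) = 0$ does the real work, since it is precisely what guarantees that all the refinement arrows appearing in the direct limit become invertible in $\mathcal{D}$. The rest, while tedious, is essentially diagram-chasing with roofs and uses nothing beyond the already-established exactness of $Q$ and the construction of $\A/\C$ recalled above.
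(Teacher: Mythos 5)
The paper does not prove this proposition; it cites it directly from Gabriel's thesis~\cite[Cor.~§III.1.2]{Gab_thesis} as a recalled fact, so there is no in-paper proof to compare against. Your construction is the standard one (and essentially what Gabriel does): $H$ is forced on objects, on morphisms you exploit that exactness of $G$ together with $G(\C)=0$ turns the structural monomorphisms $M'\hookrightarrow M$ and epimorphisms $N\twoheadrightarrow N/N'$ into isomorphisms under $G$, and you define $H$ on the colimit by conjugating $G(\phi')$ by those inverses. The formula $H([\phi']) = G(\pi)^{-1}\circ G(\phi')\circ G(\iota)^{-1}$ is correct, the cofinality/well-definedness argument is the right one, and your uniqueness argument (every morphism of $\A/\C$ factors as $Q(\pi)^{-1}\circ Q(\phi')\circ Q(\iota)^{-1}$, on which $H\circ Q=G$ pins down $H$) is sound. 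The only part you leave as ``tedious diagram-chasing'' that genuinely deserves care is compatibility with composition in $\A/\C$, since composing two roofs requires forming a pullback of the incoming mono against the outgoing epi and checking that $\C$-quotients/subobjects propagate; but this follows from thickness of $\C$ and exactness of $G$, so there is no gap, only omitted detail.
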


\begin{defn} \label{defn:Gabriel_monad}
  The thick subcategory $\C$ of the \nameft{Abel}ian category $\A$ is called a \textbf{localizing subcategory} if the canonical functor $\QQ: \A \to \A/\C$ admits a right adjoint $\SS:\A/\C \to \A$, i.e., a functor together with a binatural isomorphism
  \[
    \Hom_{\A/\C}(\QQ(-),-) \simeq \Hom_\A(-,\SS(-))\mbox{.}
  \]
  $\SS$ is called the\footnote{Cf.~Remark~\ref{rmrk:right_adjoints}.} \textbf{section functor} of $\QQ$.
  Denote by
  \[
    \delta: \QQ \circ \SS \to \Id_{\A/\C} \quad \mbox{and} \quad \eta:\Id_\A \to \SS \circ \QQ
  \]
  the counit and unit of the adjunction $\SS \dashv \QQ$, respectively.
  
  We call such a canonical functor $\QQ$ a \textbf{\nameft{Gabriel} localization} and the associated monad $(\SS \circ \QQ,\eta,\mu=\SS\delta \QQ)$ the \textbf{\nameft{Gabriel} monad}.
\end{defn}

The following definition describes the $\Sigma$-local objects in the sense of \nameft{Gabriel-Zisman}, where $\Sigma$ is the collection of all morphisms in $\A$ with both kernel and cokernel in $\C$.
\begin{defn}\label{defn:sat}
  Let $\C$ be a thick subcategory of $\A$.
  We call an object $M$ in $\A$ \textbf{$\C$-saturated} (or \textbf{$\C$-closed\footnote{\nameft{Gabriel} uses ferm\'e.}}) if it has no non-trivial subobjects in $\C$ and every extension of $M$ by an object $C \in \C$ is trivial, i.e., each short exact sequence $0 \to M \to E \to C \to 0$ in $\A$ is split.
  Define
  \[
    \Sat_\C(\A) := \mbox{full (replete) subcategory of $\C$-saturated objects}.
  \]
\end{defn}

  The following proposition together with Proposition~\ref{prop:idempotent} imply that $\A/\C$ is a reflective localization; more precisely $\A/\C \simeq \A[\Sigma^{-1}] \simeq \Sat_\C(\A)$, where $\Sigma$ is the collection of all morphisms in $\A$ with both kernel and cokernel in $\C$.

\begin{prop}[{\cite[Proposition~III.2.3 and its corollary]{Gab_thesis}}]\label{prop:localization}
  Let $\C \subset \A$ be a localizing subcategory of the \nameft{Abel}ian category $\A$ with section functor $\SS:\A/\C \to \A$.
  \begin{enumerate}
    \item $\SS:\A/\C \to \A$ is left exact and preserves products\footnote{Right adjoint functors preserve kernels and products~\cite[Theorem~II.7.7]{HS}.}. \label{prop:localization.S}
    \item The counit of the adjunction \label{prop:localization.delta} $\delta: \QQ \circ \SS \xrightarrow{\sim} \Id_{\A/\C}$ is a natural isomorphism.
    \item An object $M$ in $\A$ is $\C$-saturated if and only if $\eta_M:M \to (\SS \circ \QQ)(M)$ is an isomorphism, where $\eta$ is the unit of the adjunction. \label{prop:localization.eta}
  \end{enumerate}
\end{prop}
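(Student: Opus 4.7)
For \textbf{(1)}, since $S$ is a right adjoint, it preserves all limits existing in $\A/\C$; in particular it preserves kernels and finite products, which in an \textsc{Abel}ian category is the same as left exactness.

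For \textbf{(2)}, I apply Proposition~\ref{prop:idempotent} to the adjoint pair $Q\dashv S$: it suffices to show that the counit $\delta$ is a natural isomorphism. Since $Q$ is the identity on objects, every $N\in\A/\C$ is of the form $QM$, and the zig-zag identity $\delta_{QM}\circ Q(\eta_M)=\Id_{QM}$ reduces the task to showing $Q(\eta_M)$ is an isomorphism for every $M\in\A$. As $Q$ is exact and inverts precisely the morphisms whose kernel and cokernel lie in $\C$, the entire argument hinges on the key statement that $\ker\eta_M$ and $\coker\eta_M$ lie in $\C$ for every $M$. The kernel case is short: letting $i:K:=\ker\eta_M\hookrightarrow M$, the equation $\eta_M\circ i=0$ corresponds under the adjunction bijection $\Hom_\A(K,SQM)\cong\Hom_{\A/\C}(QK,QM)$ to $\delta_{QM}\circ Q(\eta_M)\circ Q(i)=Q(i)$ (using the zig-zag), so $Q(i)=0$; exactness of $Q$ makes $Q(i)$ monic, forcing $QK=0$ and hence $K\in\C$. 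The cokernel case is genuinely more technical and relies on the construction of the section functor $S$ in \textsc{Gabriel}'s thesis: $SQM$ is built so as to ``absorb'' extensions of $M$ by objects in $\C$, a property encoded in the hypothesis that $\C$ is localizing rather than merely thick, and this forces $\coker\eta_M\in\C$.

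For \textbf{(3)}, both directions build on (2). If $\eta_M$ is an isomorphism, then $M\cong SQM$, and I reduce to the claim that every object $SN$ in the essential image of $S$ is $\C$-saturated. The subobject condition is immediate from $\Hom_\A(D,SN)\cong\Hom_{\A/\C}(QD,N)=0$ for $D\in\C$ (since $QD\cong 0$ in $\A/\C$, a direct check in the $\Hom$-description of the quotient). For the extension condition, given $0\to SN\to E\to D\to 0$ with $D\in\C$, exactness of $Q$ combined with $QD=0$ yields an isomorphism $QSN\xrightarrow{\sim}QE$; together with $\eta_{SN}$ being an isomorphism (from (2), since $S\delta_N\circ\eta_{SN}=\Id_{SN}$ and $\delta_N$ iso force $\eta_{SN}=(S\delta_N)^{-1}$) and the naturality of $\eta$ at the inclusion $SN\hookrightarrow E$, one produces a retraction of that inclusion, splitting the sequence. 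Conversely, if $M$ is $\C$-saturated, then $\ker\eta_M\in\C$ is a $\C$-subobject of $M$, hence zero; the resulting short exact sequence $0\to M\to SQM\to\coker\eta_M\to 0$ is an extension of $M$ by $\coker\eta_M\in\C$, so by saturation it splits: $SQM\cong M\oplus\coker\eta_M$. But $\coker\eta_M$ then sits as a $\C$-subobject of the $\C$-saturated object $SQM$ (by the previous direction), hence vanishes, showing $\eta_M$ is an isomorphism.

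The \textbf{main obstacle} is the cokernel case in (2): unlike the formal kernel argument, it requires more than abstract adjunction and encodes exactly the localizing (not merely thick) nature of $\C$.
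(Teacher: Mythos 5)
The paper itself does not prove Proposition~\ref{prop:localization}; it cites Gabriel's thesis (Propositions III.2.2, III.2.3, and the corollary) and treats the statement as known input. So what you wrote is a reconstruction attempt, not something to check against a written argument in this source.

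Part~(1) is correct and is the expected one-liner: right adjoints preserve limits, and in an \textsc{Abel}ian category this is left exactness.

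For part~(2), the appeal to Proposition~\ref{prop:idempotent} is vacuous: that proposition records that ``$S$ fully faithful'', ``counit iso'', ``monad idempotent'', etc.\ are \emph{equivalent}, none of which is easier to establish a~priori, so invoking it does not reduce the problem. Your real reduction is the useful one: since $Q$ is surjective on objects and the zig-zag gives $\delta_{QM}\circ Q(\eta_M)=\Id_{QM}$, it suffices to show $Q(\eta_M)$ is invertible, which by exactness of $Q$ and the description of $\Sigma$ amounts to $\ker\eta_M\in\C$ and $\coker\eta_M\in\C$. Your kernel argument is correct (the adjunction bijection carries $\eta_M\circ i=0$ to $Q(i)=0$, and a monomorphism that is zero forces $QK=0$). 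The cokernel case, however, is not proved; you write one sentence gesturing at Gabriel's construction. That is exactly where the content is, and it cannot be dismissed as a technicality: the zig-zag only makes $Q(\eta_M)$ a \emph{split} monomorphism (equivalently $\delta$ a split epimorphism, since from the $2$-universal property of the localization $Q\eta$ descends to a section $\epsilon$ of $\delta$ with $\delta\circ\epsilon=\Id$), and no amount of abstract manipulation of the adjunction seems to force the complementary summand to vanish. One genuinely needs the ``localizing'' hypothesis beyond ``thick'' here --- e.g.\ via Proposition~\ref{prop:localizing}/Definition~\ref{defn:enough_sat}, which produce a map from $M$ to a saturated object with kernel \emph{and, by maximality, cokernel} in $\C$, and identify that map with $\eta_M$ --- or via Gabriel's explicit description of $S$. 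Your phrasing ``this forces $\coker\eta_M\in\C$'' is a placeholder, not an argument.

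Part~(3) is correct conditional on~(2). In the $\Leftarrow$ direction the retraction you build from naturality of $\eta$ at $SN\hookrightarrow E$, using that $Q$ kills $D$ and that $\eta_{SN}$ is invertible, is the right device. The $\Rightarrow$ direction is also fine as logic, but note it again leans on the unproved cokernel fact, since you need $\coker\eta_M\in\C$ to invoke the extension-splitting part of saturation. So the single gap in~(2) is doing double duty, and you correctly flag it as the main obstacle; as it stands, though, it is an acknowledged hole rather than a completed step.
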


\begin{coro} \label{coro:localization}
  Let $\C \subset \A$ be a localizing subcategory of the \nameft{Abel}ian category $\A$.
  Then $\SS(\A/\C)\simeq \Sat_\C(\A)$ are reflective subcategories of $\A$, in particular, the following holds:
  \begin{enumerate}
    \item The section functor $\SS$ is fully faithful.
    \item $\SS$ is conservative. \label{coro:localization.conservative}
    \item $\eta (\SS \circ \QQ) = (\SS \circ \QQ) \eta$. \label{coro:localization.commute}
  \end{enumerate}
\end{coro}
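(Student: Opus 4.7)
The plan is to package the corollary as a consequence of the three preliminary propositions in Sections~\ref{subsect:reflective} and \ref{sect:Gabriel}, applied to the adjunction $Q \dashv S$. The key input is that the counit $\delta: Q \circ S \to \Id_{\A/\C}$ is a natural isomorphism by Proposition~\ref{prop:localization}(\ref{prop:localization.delta}); this single fact, together with the general machinery of Proposition~\ref{prop:idempotent} and Proposition~\ref{prop:reflective}, delivers everything asked for.

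First I would identify the essential image $S(\A/\C)$ with $\Sat_\C(\A)$. For one inclusion, pick $N \in \A/\C$ and apply the zig-zag identity $S\delta \circ \eta S = \Id_S$ at $N$: since $\delta_N$ is an isomorphism, so is $S\delta_N$, and hence $\eta_{SN}$ is an isomorphism, so by Proposition~\ref{prop:localization}(\ref{prop:localization.eta}) the object $SN$ is $\C$-saturated. Conversely, if $M \in \Sat_\C(\A)$ then Proposition~\ref{prop:localization}(\ref{prop:localization.eta}) gives $M \cong SQM \in S(\A/\C)$. This establishes the equivalence $S(\A/\C) \simeq \Sat_\C(\A)$.

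Next I invoke Proposition~\ref{prop:idempotent} for $F = Q$, $G = S$. Because $\delta$ is a natural isomorphism, condition (\ref{prop:idempotent:delta}) is satisfied, so all five equivalent statements hold. Condition (1) yields claim (1) (full faithfulness of $S$); the ``In particular'' clause of Proposition~\ref{prop:idempotent} yields claim (2) (conservativity of $S$); and condition (\ref{prop:idempotent:reflective}) together with the previous paragraph exhibits $\Sat_\C(\A) \simeq S(\A/\C)$ as a reflective subcategory of $\A$.

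Finally, for claim (\ref{coro:localization.commute}), I would observe that the associated monad $W := S \circ Q$ is idempotent by Proposition~\ref{prop:idempotent}(3), and then apply the equivalence (1)$\Leftrightarrow$(2) of Proposition~\ref{prop:reflective}, which translates idempotence into the equality $W\eta = \eta W$ of the two natural transformations $W \to W^2$. This is exactly the asserted identity $\eta(S \circ Q) = (S \circ Q)\eta$. The whole argument is bookkeeping with no real obstacle; the only step requiring any care is the identification of the essential image via the zig-zag identity, and even that is a two-line consequence of $\delta$ being invertible.
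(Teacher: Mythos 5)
Your proof is correct and takes essentially the same route as the paper: the paper's own proof is the one-liner ``Proposition~\ref{prop:localization}.\eqref{prop:localization.delta} establishes the context of Proposition~\ref{prop:idempotent},'' and you have simply spelled out what that context delivers, item by item, including the appeal to Proposition~\ref{prop:reflective} for the identity $\eta(S\circ Q) = (S\circ Q)\eta$. The one place where you do a bit more than the paper's proof is the identification of the essential image of $S$ with $\Sat_\C(\A)$: you derive $\eta S$ being an isomorphism from the zig-zag identity $S\delta\circ\eta S=\Id_S$, whereas the paper defers that identification to Corollary~\ref{coro:imS} and there obtains the same conclusion from the monad's second coherence condition $\mu\circ\eta(S\circ Q)=\Id_{S\circ Q}$ together with $\mu$ being invertible. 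These are two presentations of the same small fact, so the difference is cosmetic.
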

\begin{proof}
  Proposition~\ref{prop:localization}.\eqref{prop:localization.delta} establishes the context of Propositions~\ref{prop:idempotent} and \ref{prop:reflective}.
\end{proof}

The existence of enough $\C$-saturated objects turns $\Sat_\C(\A)$ into an ``orthogonal complement'' of $\C$.
\begin{defn}\label{defn:enough_sat}
  Let $\C$ be a thick subcategory of $\A$.
  We say that $\A$ \textbf{has enough $\C$-saturated objects} if for each $M \in \A$ there exists a $\C$-saturated object $N$ and a morphism $\eta_M: M \to N$ such that $H_\C(M) := \ker \eta_M \in \C$.
\end{defn}
It follows from the fundamental theorem on homomorphisms that $H_\C(M)$\footnote{The notation $H_\C$ is motivated by the notation for local cohomology.} is the maximal subobject of $M$ in $\C$.

\begin{prop}[{\cite[Proposition~III.2.4]{Gab_thesis}}] \label{prop:localizing}
   The thick subcategory $\C \subset \A$ is loca\-lizing iff $\A$ has enough $\C$-saturated objects.
\end{prop}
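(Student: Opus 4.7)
The plan is to prove the two implications separately, using the characterizations of saturated objects and the universal property of the Serre quotient that we already have at our disposal.

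For the direction ``$\C$ localizing $\Rightarrow$ $\A$ has enough $\C$-saturated objects'', I would take the unit of the adjunction $S \dashv Q$ at an arbitrary object $M \in \A$, that is, the morphism $\eta_M: M \to S(Q(M))$. By Proposition~\ref{prop:localization}.\eqref{prop:localization.eta} the object $S(Q(M))$ is $\C$-saturated (apply the criterion to $N := S(Q(M))$ and use that $\delta$ is a natural isomorphism by Proposition~\ref{prop:localization}.\eqref{prop:localization.delta}, whence $\eta_N$ is invertible). It remains to check $\ker \eta_M \in \C$. Since $Q$ is exact and since, by the zig-zag identity for $S \dashv Q$, $Q(\eta_M)$ is the inverse of $\delta_{Q(M)}$ and hence an isomorphism, applying $Q$ to the short exact sequence extracted from $\eta_M$ shows that both $\ker \eta_M$ and the image of the cokernel live in objects that are killed by $Q$; since an object is sent to zero in the Serre quotient exactly when it lies in $\C$, this yields $\ker \eta_M \in \C$, as required.

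For the harder direction ``$\A$ has enough $\C$-saturated objects $\Rightarrow$ $\C$ localizing'', the task is to construct a right adjoint $S: \A/\C \to \A$ to the exact canonical functor $Q$. The key auxiliary statement I would first establish is that for every $M \in \A$ and every $\C$-saturated $N \in \A$ the canonical map
\[
  \Hom_\A(M,N) \longrightarrow \Hom_{\A/\C}(Q(M),Q(N))
\]
is bijective. Surjectivity and injectivity both reduce, via the colimit description of $\Hom_{\A/\C}$, to lifting morphisms $M' \to N/N'$ (with $M/M' \in \C$ and $N' \in \C$) to honest morphisms $M \to N$; here one uses the two defining properties of saturation, namely the absence of non-trivial subobjects of $N$ in $\C$ (to force $N' = 0$) and the vanishing of extensions of $N$ by objects of $\C$ (to extend $M' \to N$ to $M \to N$).

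Granted this ``fully faithful-on-saturated-targets'' lemma, I would choose for every object $X$ of $\A/\C$ (which is the same as an object of $\A$) a $\C$-saturated replacement $\eta_X: X \to N_X$ supplied by the hypothesis and set $S(X) := N_X$. The hypothesis that $\A$ has enough $\C$-saturated objects guarantees $Q(\eta_X)$ is invertible in $\A/\C$; combining this with the bijection above, the natural chain
\[
  \Hom_{\A/\C}(Q(M),X) \;\cong\; \Hom_{\A/\C}(Q(M),Q(N_X)) \;\cong\; \Hom_\A(M,N_X) \;=\; \Hom_\A(M,S(X))
\]
provides the adjunction isomorphism and in particular promotes $X \mapsto N_X$ into a functor $S$ that is right adjoint to $Q$, proving $\C$ is localizing. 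The main obstacle is the auxiliary lemma in this paragraph: the colimits over subobjects are the only place where the two saturation conditions are both actually used, and making the reduction to liftings across short exact sequences with kernel/cokernel in $\C$ clean requires a bit of care but is otherwise a routine diagram chase once the two defining axioms of $\C$-saturation are invoked.
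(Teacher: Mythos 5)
The paper does not actually prove this proposition: it is quoted with a citation to Gabriel's thesis (Proposition~\S III.2.4), so there is no proof in the paper to compare against. Evaluating your argument on its own merits:

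Your first direction is correct. The zig-zag identity $S\delta\circ\eta S=\Id_S$ together with Proposition~\ref{prop:localization}.\eqref{prop:localization.delta} shows $\eta_{S(Q(M))}$ is invertible, so $S(Q(M))$ is $\C$-saturated by Proposition~\ref{prop:localization}.\eqref{prop:localization.eta}; exactness of $Q$ and the other zig-zag identity give $Q(\ker\eta_M)=\ker Q(\eta_M)=0$, hence $\ker\eta_M\in\C=\ker Q$. (The remark about the cokernel is superfluous: Definition~\ref{defn:enough_sat} only asks for the kernel.)

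In the second direction there is a genuine gap. Definition~\ref{defn:enough_sat} supplies a morphism $\eta_X:X\to N_X$ with $N_X$ saturated and only $\ker\eta_X\in\C$; it says nothing about $\coker\eta_X$. Your assertion that ``the hypothesis $\ldots$ guarantees $Q(\eta_X)$ is invertible'' is therefore unjustified: exactness of $Q$ gives only that $Q(\eta_X)$ is monic, and it fails to be epic whenever $\coker\eta_X\notin\C$. For a concrete failure take $\A$ the category of abelian groups, $\C$ the torsion groups, $X=\Z$, and $\eta_X:\Z\to\Q^2$, $n\mapsto(n,0)$: the target is saturated, the kernel is zero, but the cokernel contains a copy of $\Q$, so $Q(\eta_X)$ is not an isomorphism and your adjunction chain breaks at the step $\Hom_{\A/\C}(QM,X)\cong\Hom_{\A/\C}(QM,QN_X)$. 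The gap can be repaired: using the remark in Definition~\ref{defn:enough_sat} that $H_\C$ exists on all of $\A$, replace $N_X$ by the preimage $\widehat{I}\subset N_X$ of $H_\C(N_X/\img\eta_X)$. One checks $\widehat{I}$ has no nonzero $\C$-subobjects (it sits in $N_X$), and the long exact $\Ext$-sequence for $0\to\widehat{I}\to N_X\to N_X/\widehat{I}\to0$ together with $H_\C(N_X/\widehat{I})=0$ shows $\widehat{I}$ is still saturated; now $\coker(X\to\widehat{I})=\widehat{I}/\img\eta_X\in\C$, so this corestricted $\eta_X$ does become invertible under $Q$ and the rest of your construction of $S$ and its adjunction isomorphism goes through. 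As written, however, this step is missing, and it is exactly the step where the hypothesis is genuinely used beyond the trivial kernel condition.
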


\begin{rmrk} \label{rmrk:Hom_avoiding_limit}
The existence of the maximal subobject in $\C$ simplifies the description of $\Hom$-groups in $\A/\C$, namely
\[
  \Hom_{\A/\C}(M,N) = \varinjlim_{\substack{M' \hookrightarrow M \\ M / M' \in \C}} \Hom_\A(M',N/H_\C(N)) \mbox{.}
\]
The existence of the section functor $\SS$ as the right adjoint of $\QQ$ reduces the computability of $\Hom$-groups in $\A/\C \simeq \Sat_\C(\A)$ even further to their computability in $\A$
\[
  \Hom_{\A/\C}(M,N) = \Hom_{\A/\C}(\QQ(M),\QQ(N)) \cong \Hom_\A(M, (\SS\circ \QQ)(N)) \mbox{,}
\]
avoiding the direct limit in the definition of $\Hom_{\A/\C}$ completely.
\end{rmrk}

\begin{coro}\label{coro:imS}
  The image $\SS(\A/\C)$ of $\SS$ is a subcategory of $\Sat_\C(\A)$ and the inclusion functor $\SS(\A/\C)\hookrightarrow \Sat_\C(\A)$ is an equivalence of categories with the restricted-corestricted \nameft{Gabriel} monad $\SS\circ \QQ:\Sat_\C(\A) \to \SS(\A/\C)$ as a quasi-inverse.
  In other words, $\Sat_\C(\A)$ is the essential image of $\SS$ and, hence, of the \nameft{Gabriel} monad $\SS\circ \QQ$.
\end{coro}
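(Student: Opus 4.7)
The plan is to establish the two inclusions/compositions that make an equivalence of categories, and then read off the essential image assertion as a direct consequence. Throughout, I will rely on Proposition~\ref{prop:localization}, especially item~\eqref{prop:localization.delta} ($\delta:Q\circ S \xrightarrow{\sim}\Id_{\A/\C}$ is a natural isomorphism) and item~\eqref{prop:localization.eta} (an object $M$ lies in $\Sat_\C(\A)$ iff $\eta_M$ is an isomorphism), together with the zig-zag identities for $S\dashv Q$.

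First, I would show the inclusion $S(\A/\C)\subseteq \Sat_\C(\A)$. Pick an object of the form $S(N)$ with $N\in\A/\C$. By \eqref{prop:localization.eta} it suffices to prove that $\eta_{S(N)}:S(N)\to SQS(N)$ is an isomorphism. The zig-zag identity for the adjunction $S\dashv Q$ gives $S\delta_N\circ \eta_{S(N)}=\Id_{S(N)}$, and by \eqref{prop:localization.delta} the map $\delta_N$ is an isomorphism, hence so is $S\delta_N$; therefore $\eta_{S(N)}$ is the inverse of an isomorphism and thus itself an isomorphism. In particular, $SQ$ carries $\Sat_\C(\A)$ into $S(\A/\C)$, so the restricted-corestricted functor $SQ\colon \Sat_\C(\A)\to S(\A/\C)$ is well defined.

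Next I would exhibit the quasi-inverse relation. On the one hand, for $M\in\Sat_\C(\A)$, item~\eqref{prop:localization.eta} says that $\eta_M\colon M\to SQ(M)$ is an isomorphism, and naturality of $\eta$ upgrades this to a natural isomorphism between the identity on $\Sat_\C(\A)$ and $\iota\circ SQ$, where $\iota\colon S(\A/\C)\hookrightarrow \Sat_\C(\A)$ denotes the inclusion. On the other hand, restricting $\eta$ to $S(\A/\C)$ and using the computation of the previous paragraph yields a natural isomorphism $\Id_{S(\A/\C)}\xrightarrow{\sim} SQ\circ \iota$. These two natural isomorphisms exhibit $\iota$ and the restricted-corestricted $SQ$ as mutually quasi-inverse, establishing the equivalence $\Sat_\C(\A)\simeq S(\A/\C)$.

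Finally, the essential image statement follows at once: every object of $\Sat_\C(\A)$ is isomorphic, via $\eta$, to $SQ(M)=S(Q(M))$, which lies in $S(\A/\C)$; conversely every object of $S(\A/\C)$ was shown to be $\C$-saturated. I do not anticipate a real obstacle, only the bookkeeping of recognising which unit/counit component is applied where; the only subtle point is the observation that $\eta_{S(N)}$ is an isomorphism, which could alternatively be deduced from Corollary~\ref{coro:localization}.\eqref{coro:localization.conservative} applied to $S\delta_N$, but the zig-zag argument is more direct.
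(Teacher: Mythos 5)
Your proof is correct and follows essentially the same approach as the paper: both hinge on showing that the unit $\eta$ is a natural isomorphism on $S(\A/\C)$, deduced from the fact that the counit $\delta$ is an isomorphism (Proposition~\ref{prop:localization}.\eqref{prop:localization.delta}) together with an adjunction/monad identity. The only cosmetic difference is that you apply the zig-zag identity $S\delta\circ\eta S=\Id_S$ directly, whereas the paper first invokes Proposition~\ref{prop:idempotent} to conclude that $\mu=S\delta Q$ is an isomorphism and then uses the monad coherence $\mu\circ\eta(S\circ Q)=\Id_{S\circ Q}$; these are the same identity viewed at the level of $S$ versus $S\circ Q$, and are interchangeable since $Q$ is surjective on objects. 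You also spell out the quasi-inverse and essential-image conclusions, which the paper leaves implicit once $\eta S$ is known to be an isomorphism, so no substantive gap either way.
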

\begin{proof}
  The inclusion of $\SS(\QQ(\A))=\SS(\A/\C)$ in $\Sat_\C(\A)$ will follow from Proposition~\ref{prop:localization}.\eqref{prop:localization.eta} as soon as we can show that $\eta \SS$, or equivalently $\eta(\SS\circ \QQ)$, is a natural isomorphism.
  Proposition~\ref{prop:idempotent} applied to Proposition~\ref{prop:localization}.\eqref{prop:localization.delta} states that the multiplication $\mu$ of the \nameft{Gabriel} monad $\SS\circ \QQ$ is a natural isomorphism.
  Finally, the second coherence condition $\mu \circ \eta (\SS \circ \QQ) =\Id_{\SS \circ \QQ}$ implies that $\eta (\SS \circ \QQ)=\mu^{-1}$ is also a natural isomorphism.
\end{proof}
\begin{coro} \label{coro:monad}
  The restricted canonical functor $\QQ:\Sat_\C(\A) \to \A/\C$ and the corestricted section functor $\SS:\A/\C \to \Sat_\C(\A)$ are quasi-inverse equivalences of categories.
  In particular, $\Sat_\C(\A) \simeq \SS(\A/\C) \simeq \A/\C$ is an \nameft{Abel}ian category.
\begin{center}
\begin{tikzpicture}[smooth, x=2.0cm, y=2.0cm]
   \node (Sat) at (0,0) {$\Sat_\C(\A)$};
   \node (S) at ($(Sat)+(2.75,0)$) {$\SS(\A/\C)=\SS(\QQ(\A))$};
   \node (AC) at ($2*(S)$) {$\A/\C$};
   \node (A) at ($(S)+(0,1.5)$) {$\A$};
   
   \draw[-doublestealth] ($(AC.west)-(0,0.05)$) -- node[below] {$\SS$} ($(S.east)-(0,0.05)$);
   \draw[stealth'-] ($(AC.west)+(0,0.05)$) -- node[above] {$\QQ$} ($(S.east)+(0,0.05)$);
   
   \draw[left hook-stealth'] ($(S.west)+(0,0.05)$) -- ($(Sat.east)+(0,0.05)$);
   \draw[stealth'-] ($(S.west)-(0,0.05)$) -- node[below] {$\SS\circ \QQ_{|\Sat_\C(\A)}$} ($(Sat.east)-(0,0.05)$);
   
   \draw[doublestealth-] ($(AC.north)+(0.06,0)$) to [bend right=25] node[above right] {$\QQ$} ($(A.east)+(0,0.04)$);
   \draw[-stealth'] ($(AC.north)-(0.07,0)$) to [bend right=25] node[below left] {$\SS$} ($(A.east)-(0,0.04)$);
   
   \draw[right hook-stealth'] ($(S.north)-(0.05,0)$) -- ($(A.south)-(0.05,0)$);
   \draw[doublestealth-] ($(S.north)+(0.05,0)$) to node[right] {$\SS \circ \QQ$} ($(A.south)+(0.05,0)$);
   
   \draw[left hook-stealth'] ($(Sat.north)+(0.07,0)$) to [bend left=25] node[below right] {$\iota$} ($(A.west)-(0,0.04)$);
   \draw[stealth'-]
   [
     postaction={
       decoration={
         text along path,raise=5pt,text align={left indent=5pt},
         text={${\widehat{\QQ}:=\operatorname{cores}_{{\Sat}_{\C}({\A})}}(\SS{\circ}\QQ)\ $}
       },decorate
     }
   ] ($(Sat.north)-(0.06,0)$) to [bend left=25] ($(A.west)+(0,0.04)$);
\end{tikzpicture}
\end{center}
  Define the reflector $\widehat{\QQ} := \cores_{\Sat_\C(\A)} (\SS \circ \QQ): \A \to \Sat_\C(\A)$.
  The adjunction $\widehat{\QQ} \dashv (\iota: \Sat_\C(\A) \hookrightarrow \A)$ corresponds under the above equivalence to the adjunction $\QQ \dashv (\SS: \A/\C \to \A)$. They both share the same adjunction monad $\SS \circ \QQ = \iota \circ \widehat{\QQ}: \A \to \A$.
  In particular, the reflector $\widehat{\QQ}$ is exact\footnote{$\Sat_\C(\A)$ is then a \nameft{Giraud} subcategory.} and $\iota$ is left exact.
\end{coro}

Let $\C \subset \A$ be a localizing subcategory of the \nameft{Abel}ian category $\A$.
$\SS(\A/\C) \simeq \Sat_\C(\A)$ are not in general \nameft{Abel}ian \emph{sub}categories of $\A$ in the sense of \cite[p.~7]{weihom}, as short exact sequences in $\Sat_\C(\A)$ are not necessarily exact in $\A$.
This is due to the fact that the left exact section functor $\SS$ (cf.~Proposition~\ref{prop:localization}.\eqref{prop:localization.S}) is in general not exact;
the cokernel in $\Sat_\C(\A) \subset \A$ differs from the cokernel in $\A$ (cf.~\cite{BL_GabrielMorphisms}).

The full subcategory $\A_\C \subset \A$ consisting of all objects with no nontrivial subobject in $\C$ is a pre-\nameft{Abel}ian category\footnote{although not an \nameft{Abel}ian category, in general, as monomorphisms need not be kernels of their cokernels. For example, the monomorphism $2:\Z \to \Z$ in $\A_\C = \{ \mbox{f.g.~torsion-free \nameft{Abel}ian groups} \}$ is not kernel of its cokernel (which is zero), where $\A := \{ \mbox{f.g.~\nameft{Abel}ian groups} \} \supset \C := \{ \mbox{f.g.~torsion \nameft{Abel}ian groups} \}$. This gives an example of a morphism which is monic and epic in $\A_\C$ but not an isomorphism.}.
The kernel of a morphisms in $\A_\C$ is its kernel in $\A$.
The cokernel of a morphism in $\A_\C$ is isomorphic to its cokernel as a morphism in $\A$ modulo the maximal subobject in $\C$, in case $\C$ is a localizing subcategory (cf.~Proposition~\ref{prop:localizing}).

An in that case $\Sat_\C(\A) \subset \A_\C$ is the \textbf{completion} of $\A_\C$ with respect to the property that every extension by an object in $\C$ is trivial.
This completion is given by the \nameft{Gabriel} monad $\SS \circ \QQ$ restricted to $\A_\C$.

\section{Characterizing \nameft{Gabriel} localizations and \nameft{Gabriel} monads} \label{sect:Characterization}

The next proposition states that in fact all exact reflective localizations in the setup \nameft{Abel}ian categories are (reflective) \nameft{Gabriel} localizations.

\begin{prop}[{\cite[Proposition~III.2.5]{Gab_thesis}, \cite[Chap.~1.2.5.d]{GabZis}}]\label{prop:local_ker}
  Let $\widetilde{\QQ} \dashv (\widetilde{\SS}:\B \to \A)$ be a pair of adjoint functors of \nameft{Abel}ian categories.
  Assume, that $\widetilde{\QQ}$ is exact and the counit $\delta:\widetilde{\QQ}\circ{\widetilde{\SS}} \to \Id_{\B}$ of the adjunction is a natural isomorphism.
  Then $\C := \ker \widetilde{\QQ}$ is a localizing subcategory of $\A$ and the adjunction $\widetilde{\QQ} \dashv (\widetilde{\SS}:\B \to \A)$ induces an adjoint equivalence from $\B$ to $\A/\C$.
\end{prop}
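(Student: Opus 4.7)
The plan is to verify that $\C := \ker \widetilde{Q}$ is localizing by exhibiting enough $\C$-saturated objects (Proposition~\ref{prop:localizing}), and then to invoke the universal property of the canonical functor $Q:\A \to \A/\C$ to factor $\widetilde{Q}$ through $Q$ and identify the resulting functor $H:\A/\C \to \B$ as an equivalence. Thickness of $\C$ is immediate from exactness of $\widetilde{Q}$: a short exact sequence $0 \to M' \to M \to M'' \to 0$ gives $\widetilde{Q}M = 0$ iff $\widetilde{Q}M' = 0 = \widetilde{Q}M''$. The hypothesis that the counit $\delta$ is a natural isomorphism puts us in the context of Proposition~\ref{prop:idempotent}, so $\widetilde{S}$ is fully faithful, the associated monad $W := \widetilde{S}\widetilde{Q}$ is idempotent, and $\eta_{\widetilde{S}b}$ is an isomorphism for every $b \in \B$.

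For each $M \in \A$ I take $\eta_M:M \to \widetilde{S}\widetilde{Q}M$ as the candidate $\C$-saturating map. That $\ker \eta_M$ (and, for later use, $\coker \eta_M$) lies in $\C$ follows by applying the exact functor $\widetilde{Q}$ and observing that $\widetilde{Q}(\eta_M)$ is an isomorphism: the zig-zag identity $\delta\widetilde{Q} \circ \widetilde{Q}\eta = \Id_{\widetilde{Q}}$ forces $\widetilde{Q}(\eta_M)$ to be a two-sided inverse of the iso $\delta_{\widetilde{Q}M}$. To see that $\widetilde{S}\widetilde{Q}M$ is $\C$-saturated I would first rule out nonzero subobjects in $\C$: any monomorphism $K \hookrightarrow \widetilde{S}\widetilde{Q}M$ corresponds under the adjunction to a morphism $\widetilde{Q}K \to \widetilde{Q}M$, but $\widetilde{Q}K = 0$, so the inclusion is zero and $K = 0$. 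For the triviality of an extension $0 \to \widetilde{S}\widetilde{Q}M \xrightarrow{\iota} E \to C \to 0$ with $C \in \C$, exactness of $\widetilde{Q}$ combined with $\widetilde{Q}C = 0$ makes $\widetilde{Q}(\iota)$, and hence $\widetilde{S}\widetilde{Q}(\iota)$, an isomorphism. The naturality square $\widetilde{S}\widetilde{Q}(\iota) \circ \eta_{\widetilde{S}\widetilde{Q}M} = \eta_E \circ \iota$ then forces $\eta_E \circ \iota$ to be an isomorphism, so that $(\eta_E \circ \iota)^{-1} \circ \eta_E$ is a retraction of $\iota$ and the extension splits.

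With $\C$ now localizing, Proposition~\ref{prop:universal_property_of_Q} applied to the exact functor $\widetilde{Q}$ produces a unique $H:\A/\C \to \B$ with $\widetilde{Q} = H \circ Q$. To see $H$ is an equivalence I would exhibit $H' := Q \circ \widetilde{S}:\B \to \A/\C$ as quasi-inverse: $H \circ H' = \widetilde{Q}\widetilde{S} \cong \Id_\B$ via the counit, and $H' \circ H \circ Q = Q \circ \widetilde{S}\widetilde{Q} \cong Q$ because $Q(\eta_M)$ is an isomorphism---both $\ker \eta_M$ and $\coker \eta_M$ lie in $\C$ and are therefore inverted by $Q$. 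Transporting the adjunction $Q \dashv S$ along $H$ then yields the asserted adjoint equivalence $\B \simeq \A/\C$ induced by $\widetilde{Q} \dashv \widetilde{S}$.

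The main obstacle I anticipate is the extension-splitting step in the $\C$-saturatedness of $\widetilde{S}\widetilde{Q}M$: producing the retraction requires careful bookkeeping of the naturality square of $\eta$ at $\iota$, together with two separate appeals to the counit hypothesis (once to make $\widetilde{Q}(\iota)$ iso via $\widetilde{Q}C = 0$, once to make $\eta_{\widetilde{S}\widetilde{Q}M}$ iso via Proposition~\ref{prop:idempotent}). All remaining parts of the argument reduce to exactness of $\widetilde{Q}$ or routine applications of the zig-zag identities.
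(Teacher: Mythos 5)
The paper does not actually give its own proof of this proposition; it cites \cite[Proposition~\S III.2.5]{Gab_thesis} and \cite[Chap.~1.2.5.d]{GabZis} and leaves the verification to the references. Your reconstruction follows what is in effect \textsc{Gabriel}'s original plan: exactness of $\widetilde{Q}$ gives thickness of $\C$ directly; the counit hypothesis, via the zig-zag identities and Proposition~\ref{prop:idempotent}, makes $\widetilde{Q}\eta_M$ and $\eta_{\widetilde{S}b}$ isomorphisms; $\eta_M : M \to \widetilde{S}\widetilde{Q}M$ then serves as the $\C$-saturating map and lands in a $\C$-saturated object; Proposition~\ref{prop:localizing} then gives that $\C$ is localizing; and the universal property of $Q$ factors $\widetilde{Q}$ through $Q$ as $H$, which is an equivalence. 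The two sub-verifications of $\C$-saturatedness (no nonzero $\C$-subobject via the adjunction isomorphism $\Hom_\B(\widetilde{Q}K,\widetilde{Q}M)\cong\Hom_\A(K,\widetilde{S}\widetilde{Q}M)=0$; triviality of $\C$-extensions via the retraction $(\eta_E\circ\iota)^{-1}\circ\eta_E$) are both correct.

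Two small spots deserve tightening. First, to pass from $H'\circ H\circ Q\cong Q$ to $H'\circ H\cong\Id_{\A/\C}$ you are tacitly using more than the uniqueness clause of Proposition~\ref{prop:universal_property_of_Q} (which speaks of strict factorizations $G=H\circ Q$): you need that precomposition with $Q$ is fully faithful on functor categories, which is the content of $Q$ realizing the reflective localization $\A\to\Sigma^{-1}\A$ (Proposition~\ref{prop:idempotent}\eqref{prop:idempotent:reflective} and the discussion after Definition~\ref{defn:idem}). Second, the closing sentence ``transporting the adjunction $Q\dashv S$ along $H$'' is not quite an argument; the cleanest finish is either to note that a quasi-inverse pair $(H,H')$ of functors between categories can always be promoted to an adjoint equivalence by adjusting one of the two natural isomorphisms, or to produce $H\dashv H'$ directly from the chain $\Hom_\B(H(QM),b)=\Hom_\B(\widetilde Q M,b)\cong\Hom_\A(M,\widetilde S b)\cong\Hom_\A(M,S H' b)\cong\Hom_{\A/\C}(QM,H'b)$, again using the strong universal property of $Q$ to descend naturality. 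Also, a minor slip in your final paragraph: making $\widetilde{Q}(\iota)$ an isomorphism uses only exactness of $\widetilde{Q}$ and $\widetilde{Q}C=0$, not the counit hypothesis, as your main text in fact states correctly.
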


Now, we approach the central definition of this paper which collects some properties of \nameft{Gabriel} monads.

\begin{defn}\label{defn:Csaturating}
  Let $\C \subset \A$ be a localizing subcategory of the \nameft{Abel}ian category $\A$ and $\iota: \Sat_\C(\A) \hookrightarrow \A$ the full embedding.
  We call an endofunctor $\WW:\A \to \A$ together with a natural transformation $\widetilde{\eta}:\Id_\A \to \WW$ \textbf{$\C$-saturating} if the following holds:
  \begin{enumerate}
    \item $\C \subset \ker \WW$, \label{defn:Csaturating:ker}
    \item $\WW(\A) \subset \Sat_\C(\A)$, \label{defn:Csaturating:im}
    \item $\GG := \cores_{\Sat_\C(\A)} \WW$ is exact, \label{defn:Csaturating:exact}
    \item $\widetilde{\eta} \WW = \WW \widetilde{\eta}$, and \label{defn:Csaturating:idem}
    \item $\widetilde{\eta} \iota: \Id_{\A \mid \Sat_\C(\A)} \to \WW_{|\Sat_\C(\A)}$ is a natural isomorphism\footnote{In particular, $\WW(\A)$ is an essentially wide subcategory of $\Sat_\C( \A)$.}. \label{defn:Csaturating:eta}
  \end{enumerate}
  Let $\HH$ be the unique functor from Proposition~\ref{prop:universal_property_of_Q} such that $\GG=\HH \circ \QQ$.
  We call the composed functor $\widetilde{\HH} := \iota \circ \HH$ the \textbf{colift of $\WW$ along $\QQ$}, since $\WW = \iota \circ \GG = \widetilde{\HH} \circ \QQ$.
\end{defn}

\begin{lemm} \label{lemm:GabrielSaturating}
  Let $\QQ: \A \to \A/\C$ be a \nameft{Gabriel} localization with section functor $\SS$.
  Then each $\C$-saturating endofunctor $\WW$ of $\A$ is naturally isomorphic to $\SS \circ \QQ$.
  Furthermore, the colift $\widetilde{\HH}$ of $\WW$ along $\QQ$ is also a section functor naturally isomorphic to $\SS$.
\end{lemm}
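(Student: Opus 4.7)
The plan is to recognize $W$ as arising from the reflective localization onto $\Sat_\C(\A)$, and then invoke the essential uniqueness of adjoints to identify it with $S \circ Q$.

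First I would show that the corestriction $G := \cores_{\Sat_\C(\A)} W$ is left adjoint to the inclusion $\iota : \Sat_\C(\A) \hookrightarrow \A$, with $\widetilde{\eta}$ playing the role of the unit. For $M \in \A$ and $N \in \Sat_\C(\A)$, the natural bijection
\[
  \Hom_\A(M, \iota N) \xrightarrow{\sim} \Hom_{\Sat_\C(\A)}(GM, N), \qquad f \mapsto \widetilde{\eta}_N^{-1} \circ W(f),
\]
is well-defined because $\widetilde{\eta}_N$ is invertible on $\Sat_\C(\A)$ by condition~\eqref{defn:Csaturating:eta}, and its inverse sends $g$ to $\iota(g) \circ \widetilde{\eta}_M$. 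Verifying that the two maps are mutually inverse is a short diagram chase: one direction uses only the naturality of $\widetilde{\eta}$, whereas the other additionally requires condition~\eqref{defn:Csaturating:idem}, $\widetilde{\eta} W = W \widetilde{\eta}$, to rewrite $W(\widetilde{\eta}_M)$ as $\widetilde{\eta}_{WM}$.

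Having established this adjunction, I would compare it with the \textsc{Gabriel} adjunction from Corollary~\ref{coro:monad}: there $\cores_{\Sat_\C(\A)}(S \circ Q) = (\cores_{\Sat_\C(\A)} S) \circ Q$ is also left adjoint to $\iota$, so the essential uniqueness of left adjoints produces a natural isomorphism $G \cong \cores_{\Sat_\C(\A)}(S \circ Q)$. Whiskering with $\iota$ yields the first claim $W \cong S \circ Q$.

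For the second claim I would combine $W = \widetilde{H} \circ Q$ (true by the construction of the colift) with the counit isomorphism $\delta : Q \circ S \xrightarrow{\sim} \Id_{\A/\C}$ from Proposition~\ref{prop:localization}.\eqref{prop:localization.delta}, giving the chain of natural isomorphisms
\[
  \widetilde{H} \xleftarrow{\sim} \widetilde{H} \circ Q \circ S = W \circ S \xrightarrow{\sim} S \circ Q \circ S \xrightarrow{\sim} S,
\]
where the outer arrows are induced by $\delta$ and the middle arrow is the first claim whiskered on the right by $S$. The only step that requires genuine care is the verification of the two inverse identities for the adjunction $G \dashv \iota$; after that the argument is formal, since the axioms in Definition~\ref{defn:Csaturating} have been engineered precisely so that this adjunction drops out and uniqueness of adjoints handles the rest.
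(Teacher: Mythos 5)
Your proof is correct, but it takes a genuinely different route from the paper.  The paper's argument is a short computation with the colift: writing $G = H\circ Q$ by the universal property (Proposition~\ref{prop:universal_property_of_Q}, using Definition~\ref{defn:Csaturating}.\eqref{defn:Csaturating:ker} and \eqref{defn:Csaturating:exact}), inserting $\Id_{\A/\C}\simeq Q\circ S$ from Proposition~\ref{prop:localization}.\eqref{prop:localization.delta}, and cancelling $G$ on the image of $S$ via Definition~\ref{defn:Csaturating}.\eqref{defn:Csaturating:eta}; notably it never invokes condition \eqref{defn:Csaturating:idem} for this lemma.  You instead exhibit an explicit hom-set bijection making $G\dashv\iota$ an adjunction with unit $\widetilde{\eta}$ (which requires \eqref{defn:Csaturating:idem} in one direction of the inverse check) and then appeal to essential uniqueness of left adjoints, comparing with the reflector $\cores_{\Sat_\C(\A)}(S\circ Q)\dashv\iota$ from Corollary~\ref{coro:monad}; whiskering with $\iota$ gives $W\simeq S\circ Q$.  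Your route is more conceptual --- it directly identifies $W$ as arising from the reflection onto $\Sat_\C(\A)$ and in fact doesn't need $\C\subset\ker W$ for the first claim --- at the cost of verifying the triangle identities by hand, whereas the paper's route is a purely formal chain that leans on the already-established universal property of $Q$.  Your derivation of $\widetilde H\simeq S$ via $\widetilde H\simeq\widetilde H\circ Q\circ S=W\circ S\simeq S\circ Q\circ S\simeq S$ is also correct and essentially equivalent to the paper's remark that the isomorphism $\widetilde H\circ Q\simeq S\circ Q$ descends because $Q$ is surjective on objects.
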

\begin{proof}
For $\GG := \cores_{\Sat_\C(\A)} \WW$ let $\HH$ be the unique functor from Proposition~\ref{prop:universal_property_of_Q} (using Definition~\ref{defn:Csaturating}.\eqref{defn:Csaturating:ker}) such that $\GG=\HH \circ \QQ$.
  \begin{align*}
    \GG
    &=
    \HH \circ \QQ \\
    &\simeq
    \HH\circ \QQ\circ \SS\circ \QQ & \mbox{($\Id_{\A/\C}\simeq\QQ\circ \SS$ by Proposition~\ref{prop:localization}.\eqref{prop:localization.delta})} \\
    &=
    \GG \circ \SS \circ \QQ \\
    &\simeq
    \Id_{\Sat_\C(\A)} \circ \SS\circ \QQ & \mbox{(using $\SS(\QQ(\A)) \subset \Sat_\C(\A)$ and \ref{defn:Csaturating}.\eqref{defn:Csaturating:eta})} \\
    &=
    \cores_{\Sat_\C(\A)} (\SS \circ \QQ) \mbox{.}
  \end{align*}
  Then, using the notation of Definition~\ref{defn:Csaturating},
  \[
    \WW = \widetilde{\HH} \circ \QQ = \iota \circ \GG \simeq \SS \circ \QQ \mbox{.}
  \]
  This also proves the equivalence $\widetilde{\HH} \simeq \SS$, as $\QQ$ is surjective.
\end{proof}

\begin{prop}\label{prop:C_saturating_monad}
  Let $\QQ: \A \to \A/\C$ be a \nameft{Gabriel} localization and $(\WW,\widetilde{\eta})$ be a \linebreak[3] $\C$-saturating endofunctor of $\A$ with colift $\widetilde{\HH}$ along $\QQ$.
  Then there exists a natural transformation $\widetilde{\delta}: \QQ \circ \widetilde{\HH} \to \Id_{\A/\C}$ such that $\QQ$ and $\widetilde{\HH}$ form an adjoint pair $\QQ \dashv \widetilde{\HH}$ with unit $\widetilde{\eta}$ and counit $\widetilde{\delta}$.
\end{prop}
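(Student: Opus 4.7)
The plan is to exploit the full faithfulness of $\widetilde{H}$ in order to define $\widetilde{\delta}$ as a preimage under $\widetilde{H}$ and reduce the zigzag identities to equalities in $\A$. By Lemma~\ref{lemm:GabrielSaturating} there is a natural isomorphism $\widetilde{H} \simeq S$, and $S$ is fully faithful by Corollary~\ref{coro:localization}, so $\widetilde{H}$ is fully faithful as well.

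For $N \in \A/\C$, the object $\widetilde{H}(N)$ lies in $\Sat_\C(\A)$, because $\widetilde{H}(\A/\C) = W(\A) \subset \Sat_\C(\A)$ by Definition~\ref{defn:Csaturating}.\eqref{defn:Csaturating:im}. Hence Definition~\ref{defn:Csaturating}.\eqref{defn:Csaturating:eta} makes $\widetilde{\eta}_{\widetilde{H}(N)} : \widetilde{H}(N) \to W\widetilde{H}(N) = \widetilde{H}(Q\widetilde{H}(N))$ an isomorphism in $\A$, and I define $\widetilde{\delta}_N : Q\widetilde{H}(N) \to N$ as the unique morphism in $\A/\C$ satisfying $\widetilde{H}(\widetilde{\delta}_N) = \widetilde{\eta}_{\widetilde{H}(N)}^{-1}$; both existence and uniqueness rest on $\widetilde{H}$ being fully faithful. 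Naturality of $\widetilde{\delta}$ in $N$ then follows from the naturality of $\widetilde{\eta}$ at the morphism $\widetilde{H}(f)$ in $\A$, combined with the faithfulness of $\widetilde{H}$.

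For the two zigzag identities, $\widetilde{H}(\widetilde{\delta}_N) \circ \widetilde{\eta}_{\widetilde{H}(N)} = \Id_{\widetilde{H}(N)}$ holds by the very construction of $\widetilde{\delta}_N$. The other identity $\widetilde{\delta}_{Q(M)} \circ Q(\widetilde{\eta}_M) = \Id_{Q(M)}$ (for $M \in \A$) becomes, after applying $\widetilde{H}$ and invoking its faithfulness, $\widetilde{\eta}_{W(M)}^{-1} \circ W(\widetilde{\eta}_M) = \Id_{W(M)}$. This is where Definition~\ref{defn:Csaturating}.\eqref{defn:Csaturating:idem} is indispensable, supplying the pointwise equality $W(\widetilde{\eta}_M) = \widetilde{\eta}_{W(M)}$; I expect this to be the main pinch point, since the remaining axioms only guarantee that $\widetilde{H}$ behaves as a section functor and that the unit lands in the saturated subcategory, but without $\widetilde{\eta}W = W\widetilde{\eta}$ the second triangle cannot be made to close.
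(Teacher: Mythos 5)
Your proof is correct and takes a route that is genuinely dual to the paper's, though it leans on the same underlying facts. The paper defines the counit by whiskering with $Q$: it first shows $W\widetilde{\eta}=\widetilde{H}(Q\widetilde{\eta})$ is a natural isomorphism using axioms \eqref{defn:Csaturating:idem} and \eqref{defn:Csaturating:eta}, deduces that $Q\widetilde{\eta}$ is a natural isomorphism because $\widetilde{H}\simeq S$ reflects isomorphisms, and sets $\widetilde{\delta}Q:=(Q\widetilde{\eta})^{-1}$, appealing to the surjectivity of $Q$ on objects; the first zig-zag identity then holds by fiat and the second is reduced (again by surjectivity of $Q$) to the computation with axiom \eqref{defn:Csaturating:idem}. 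You instead define $\widetilde{\delta}_N$ for each $N$ directly as the $\widetilde{H}$-preimage of $\widetilde{\eta}_{\widetilde{H}(N)}^{-1}$, exploiting full faithfulness of $\widetilde{H}\simeq S$; for you the \emph{second} zig-zag holds by fiat and the \emph{first} is what needs axiom \eqref{defn:Csaturating:idem}. Your approach buys two small advantages: you obtain the pointwise isomorphism $\widetilde{\eta}_{\widetilde{H}(N)}$ from axioms \eqref{defn:Csaturating:im} and \eqref{defn:Csaturating:eta} alone, without the detour through conservativity, and you address naturality of $\widetilde{\delta}$ explicitly (via naturality of $\widetilde{\eta}$ at $\widetilde{H}(f)$ plus faithfulness), a point the paper leaves implicit behind the phrase ``as $Q$ is surjective.'' The paper's approach, by working entirely through $Q$, stays closer to the \textsc{Gabriel--Zisman} picture of localization. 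Both routes pivot on Lemma~\ref{lemm:GabrielSaturating} to transfer the properties of $S$ to $\widetilde{H}$, and both identify axiom \eqref{defn:Csaturating:idem} as the indispensable ingredient for closing the remaining triangle, as you correctly flag.
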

\begin{defn}
Hence, each $\C$-saturating endofunctor $(\WW,\widetilde{\eta})$ is the monad $(\WW,\widetilde{\eta},\widetilde{\HH}\widetilde{\delta}\QQ)$ associated to the adjunction $\QQ \dashv \widetilde{\HH}$.
We call it a \textbf{$\C$-saturating monad}.
\end{defn}
\begin{proof}[Proof of Proposition~\ref{prop:C_saturating_monad}]
  We define a natural transformation $\widetilde{\delta}: \QQ \circ \widetilde{\HH} \to \Id_{\A/\C}$ and check the two zig-zag identities \eqref{zigzag}, i.e., that the compositions of natural transformations
  \[
      \QQ \xrightarrow{\QQ\widetilde{\eta}} \QQ\circ \widetilde{\HH} \circ \QQ \xrightarrow{\widetilde{\delta} \QQ} \QQ
      \quad \mbox{and} \quad
      \widetilde{\HH} \xrightarrow{\widetilde{\eta} \widetilde{\HH}} \widetilde{\HH}\circ \QQ \circ  \widetilde{\HH} \xrightarrow{\widetilde{\HH} \widetilde{\delta}} \widetilde{\HH}
  \]
  are the identity of functors.
  By \ref{defn:Csaturating}.\eqref{defn:Csaturating:eta} we know that $(\widetilde{\HH} \circ \QQ) \widetilde{\eta} = \WW \widetilde{\eta} \stackrel{\ref{defn:Csaturating}.\text{\eqref{defn:Csaturating:idem}}}{=} \widetilde{\eta} \WW = (\widetilde{\eta}\iota) \GG$ is an isomorphism.
  Hence, also $\QQ \widetilde{\eta}$ is a natural isomorphism, because the functor $\widetilde{\HH}$ is equivalent to $\SS$ by Lemma~\ref{lemm:GabrielSaturating} and, thus, reflects isomorphisms by Lemma~\ref{coro:localization}.\eqref{coro:localization.conservative}.  
  This allows us to define $\widetilde{\delta}$ in such a way to satisfy the first zig-zag identity, i.e., set $\widetilde{\delta} \QQ := (\QQ \widetilde{\eta})^{-1}$.
  This defines $\widetilde{\delta}$ as $\QQ$ is surjective (on objects).
  The second zig-zag identity is equivalent, again due to the surjectivity of $\QQ$, to the second zig-zag identity applied to $\QQ$, i.e., $(\widetilde{\HH} \widetilde{\delta} \QQ) \circ \widetilde{\eta} (\widetilde{\HH} \circ \QQ)$ being the identity transformation of the functor $\widetilde{\HH} \circ \QQ=\WW$.
  Now
  \begin{align*}
    (\widetilde{\HH} \widetilde{\delta} \QQ) \circ \widetilde{\eta} (\widetilde{\HH} \circ \QQ)
    &=
    (\widetilde{\HH} (\QQ \widetilde{\eta})^{-1}) \circ \widetilde{\eta} (\widetilde{\HH} \circ \QQ) & \mbox{(by the definition $\widetilde{\delta} \QQ := (\QQ \widetilde{\eta})^{-1}$)} \\
    &=
    ((\widetilde{\HH} \circ \QQ) \widetilde{\eta})^{-1} \circ \widetilde{\eta} (\widetilde{\HH} \circ \QQ) \\
    &=
    (\widetilde{\eta} (\widetilde{\HH} \circ \QQ))^{-1} \circ \widetilde{\eta} (\widetilde{\HH} \circ \QQ) & \mbox{(using $\widetilde{\eta} (\widetilde{\HH} \circ \QQ) \stackrel{\text{\ref{defn:Csaturating}.\eqref{defn:Csaturating:idem}}}{=} (\widetilde{\HH} \circ \QQ) \widetilde{\eta}$)} \\
    &=
    \Id_{\widetilde{\HH} \circ \QQ} \mbox{.}
  \end{align*}
\end{proof}

We now approach our main result.
\begin{theorem}[Characterization of \nameft{Gabriel} monads] \label{thm:equivalent_section}
  Each \nameft{Gabriel} monad is a $\C$-saturating monad.
  Conversely, each $\C$-saturating monad is equivalent to a \nameft{Gabriel} monad.
\end{theorem}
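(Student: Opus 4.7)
The plan is to establish both directions of the theorem by reducing to results already in the paper.

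For the first direction --- that every \textsc{Gabriel} monad $(S\circ Q,\eta,S\delta Q)$ is a $\C$-saturating monad --- I would verify the five axioms of Definition~\ref{defn:Csaturating} in turn. Axiom~\eqref{defn:Csaturating:ker} is the identity $\C=\ker Q$ built into the construction of the \textsc{Serre} quotient (and recorded in Proposition~\ref{prop:local_ker}). Axiom~\eqref{defn:Csaturating:im}, the inclusion $S(\A/\C)\subseteq\Sat_\C(\A)$, is Corollary~\ref{coro:imS}. Axiom~\eqref{defn:Csaturating:exact}, the exactness of $\cores_{\Sat_\C(\A)}(S\circ Q)$, is the final clause of Corollary~\ref{coro:monad}. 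Axiom~\eqref{defn:Csaturating:idem}, the equality $\eta(S\circ Q)=(S\circ Q)\eta$, is Corollary~\ref{coro:localization}.\eqref{coro:localization.commute}. Finally, Axiom~\eqref{defn:Csaturating:eta}, that $\eta_M\colon M\to S(Q(M))$ is invertible for every $\C$-saturated $M$, is Proposition~\ref{prop:localization}.\eqref{prop:localization.eta}. Hence this direction is essentially bookkeeping.

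For the converse, let $(W,\widetilde\eta,\widetilde\mu)$ be a $\C$-saturating monad with colift $\widetilde H$ of $W$ along $Q$. Lemma~\ref{lemm:GabrielSaturating} already produces natural isomorphisms $W\simeq S\circ Q$ and $\widetilde H\simeq S$ as functors. Proposition~\ref{prop:C_saturating_monad} additionally exhibits an adjunction $Q\dashv\widetilde H$ whose unit is $\widetilde\eta$, whose counit satisfies $\widetilde\delta Q=(Q\widetilde\eta)^{-1}$, and whose associated monad is exactly $(W,\widetilde\eta,\widetilde H\widetilde\delta Q)=(W,\widetilde\eta,\widetilde\mu)$. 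Since a right adjoint of $Q$ is unique up to unique natural isomorphism, the adjunction $Q\dashv\widetilde H$ is canonically equivalent to the \textsc{Gabriel} adjunction $Q\dashv S$; under this equivalence $\widetilde\eta$ corresponds to $\eta$ and $\widetilde\delta$ to $\delta$, so $\widetilde\mu=\widetilde H\widetilde\delta Q$ is carried to $S\delta Q=\mu$. Therefore $(W,\widetilde\eta,\widetilde\mu)$ is isomorphic, as a monad, to the \textsc{Gabriel} monad.

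The main obstacle --- really the only one --- is to see that the functor-level isomorphism $W\simeq S\circ Q$ from Lemma~\ref{lemm:GabrielSaturating} upgrades to an isomorphism of monads, intertwining units, counits, and multiplications. Abstractly this is the statement that an idempotent monad is determined up to canonical isomorphism by the reflective subcategory it cuts out (Proposition~\ref{prop:reflective}), and both monads in question cut out $\Sat_\C(\A)$. For a hands-on check one can instead use the explicit formula $\widetilde\delta Q=(Q\widetilde\eta)^{-1}$ extracted from the proof of Proposition~\ref{prop:C_saturating_monad}, together with the analogous identity for the \textsc{Gabriel} adjunction and the surjectivity of $Q$ on objects; this turns the compatibility into a short diagram chase.
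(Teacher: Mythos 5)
Your proposal is correct and follows essentially the same route as the paper: verifying the five axioms of Definition~\ref{defn:Csaturating} one by one against the cited corollaries and propositions for the forward direction, and for the converse invoking Lemma~\ref{lemm:GabrielSaturating} together with Proposition~\ref{prop:C_saturating_monad} to exhibit $Q\dashv\widetilde H$ as an adjunction equivalent to $Q\dashv S$, hence with isomorphic associated monads. Your closing paragraph merely spells out the uniqueness-of-right-adjoints argument that the paper leaves implicit in the phrase ``and so are their associated monads,'' which is a welcome clarification but not a genuinely different approach.
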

\begin{proof}
The conditions in Definition~\ref{defn:Csaturating} clearly apply to a \nameft{Gabriel} monad $\SS\circ \QQ$ by definition of the canonical functor $\QQ$, Corollary~\ref{coro:imS}, Corollary~\ref{coro:monad}, Corollary~\ref{coro:localization}.\eqref{coro:localization.commute}, and Proposition~\ref{prop:localization}.\eqref{prop:localization.eta}, respectively.

The converse follows directly from Lemma~\ref{lemm:GabrielSaturating} and Proposition~\ref{prop:C_saturating_monad} which prove that the two adjunctions $\QQ \dashv \widetilde{\HH}$ and $\QQ \dashv \SS$ are equivalent, and so are their associated monads.
\end{proof}

\def\cprime{$'$} \def\cprime{$'$} \def\cprime{$'$} \def\cprime{$'$}
  \def\cprime{$'$}
\providecommand{\bysame}{\leavevmode\hbox to3em{\hrulefill}\thinspace}
\providecommand{\MR}{\relax\ifhmode\unskip\space\fi MR }
\providecommand{\MRhref}[2]{%
  \href{http://www.ams.org/mathscinet-getitem?mr=#1}{#2}
}
\providecommand{\href}[2]{#2}

\end{document}
